\newtheorem{prop}{Proposition}
\newtheorem{thm}{Theorem}
\newtheorem{defn}{Definition}
\newtheorem{cor}{Corollary}
\title{On normed semigroups}
\author{{\bf V. N. Krishnachandran} \\
Vidya Academy of Science \& Technology\\
(University of Calicut) \\
Thrissur - 680501, Kerala, India\\
{(email: {\tt krishnachandran.v.n@vidyaacademy.ac.in})}
}
\date{}
\begin{document}
\maketitle
\begin{abstract}
The paper begins by exploring the various definitions of norms on semigroups and then presents a new definition of a normed semigroup. The properties of normed semigroups in the new sense are investigated. The new definition of the norm is used to establish a general result on topological regular semigroups which is then used to prove the surprising result that the  semigroup $M_n(\mathbb K)$ is {\em not} a topological regular semigroup. 
\end{abstract}
\section{Introduction}
It appears that there is no one generally accepted definition of a normed
 semigroup. Each author has formulated a definition to suit the immediate
 purposes of his/her requirements. This has resulted in the existence of
 several definitions of normed semigroups. A Google search of the
 literature has brought out  at least six different definitions of norms
 on semigroups. In nearly all of these approaches the semigroups are assumed to have many added structures. In the present paper an attempt is made to define a norm on any arbitrary semigroup without assuming any other additional structure. This is purely a semigroup-theoretic approach. 
Some consequences of the definition are also explored. As an application of the new approach, the concept is used to derive an interesting property of topological regular semigroups. This property is then used to prove the surprising result that the semigroup $M_n(\mathbb K)$ of all $n\times n$ matrices over $\mathbb K=\mathbb R$ or $\mathbb K=\mathbb C$ with the usual topology is not a topological regular semigroup in the sense of Rajan (see \cite{Rajan}). This conclusion makes a revision of the definition toplogical regular semigroup very imperative.
\section{Normed semigroups in the literature}
In the literature, one can see several definitions of normed semigroups.
\begin{enumerate}
\item
H. Wegmann (see \cite{Wegmann, Porubsky}) in 1966 introduced the idea of a norm for certain free abelian semigrups as follows: ``A free abelian semigroup $G=\{m,n,\ldots\}$ with identity and a countable system of generators 
$$
E=\{p_1, p_2, p_3,\ldots\}
$$ 
will be called arithmetical if to every element $n$ in $G$ a real-valued norm $|n|$ is assigned such that 
\begin{enumerate}
\item
$|n\cdot m|=|n|\times |m|$ for all $m,n\in G$,
\item
$|p_i|>1$ for all $i$,
\item
$\lim_{i\rightarrow \infty} |p_i|=\infty$.''
\end{enumerate}
\item
In a paper published in 1985, Z. Kryzius defined a normed semigroup follows (see \cite{Kryzius}):
``Let $S$ be a free commutative multiplicative semigrop with identity $e$ and a countable system of generating elements. On it there is defined a positive function $N(a)$. $N(a)$ is completely multiplicative, that is, for all $a,b\in S$ one has $N(ab)=N(a)N(b)$, and $N(e)=1$. Also for any $y\in \mathbb R$ the set $\{a\in S:N(a)\le y\}$ is finite. This function is called the norm."
\item
In \cite{Dikran}, a normed semigroup is defined as follows:
``Let $(S, \cdot)$ be a semigroup. A norm on $S$ is a map $v : S \rightarrow \mathbb R_{\ge 0}$  such that
$$
v(x\cdot y) \le  v(x) + v(y)\text{ for every }  x, y \in S.
$$
A normed semigroup is a semigroup provided with a norm.
If $S$ is a monoid, a monoid norm on $S$ is a semigroup norm $v$ such that
$v(1) = 0$; in such a case $S$ is called normed monoid.''
\item
A normed semigroup is also defined as a normed module over $\mathbb C$ (see \cite{Pavlov}). ``Let $N$ be an abelian semigroup and let $\mathbb C \times N \rightarrow N$, $(\lambda , a)\mapsto \lambda a$, be a given action satisfying the following
conditions: For all $\lambda, \mu \in \mathbb C$ and $a,b\in N$, 
\begin{gather*}
(\lambda \mu)a=\lambda(\mu a)\\
\lambda(a+b) =\lambda a + \lambda b\\
1a =a \\
0a=0
\end{gather*}
Then $N$ is called a module over $\mathbb C$. 
A norm on $N$ is a mapping
$$
||\cdot||:N\rightarrow [0,\infty)
$$
such that,  for all $\lambda \in \mathbb C$ and $a,b\in N$, we have
\begin{gather*}
||a+b||\le ||a|| +||b||\\
||\lambda a|| = |\lambda|\, ||a||\\
||a||=0\Longleftrightarrow  a=0.
\end{gather*}
A semigroup $N$ equipped with the structure of a $\mathbb C$-module and a norm is called a normed (abelian) semigroup.''
\item
%
Here is yet another definition of a normed semigroup (see, for example, \cite{Shkarin}):
''Let $A$ be an abelian monoid. A norm on $A$ is a function $|\cdot | : A \rightarrow [0,\infty)$ satisfying the conditions
$|na| = n|a|$ and $|a + b| \le |a| + |b|$ for any positive integer $n$  and $a, b \in A$. An abelian monoid equipped with
a norm is a normed semigroup.''
\item
Oscar Valero defines a normed monoid as follows (see \cite{Valero}): ``A normed monoid is a pair $(X,q)$ where $X$ is a monoid and $q:X\rightarrow \mathbb R^+$ satisfying the following conditions:
\begin{enumerate}
\item
$x = 0$ if and only if there is $−x \in  X$ and $q(x) = q(-x) = 0$.
\item
$q(x + y) \le  q(x) + q(y)$ for all $x, y\in X$.
\item
$ q(x) = 0$ if and only if $x = 0$.
\end{enumerate}
\end{enumerate} 
In the next section we give a new definition of a normed semigroup and give some examples. This new definition is more closely related to the algebraic structure of semigroups. 
\section{Normed semigroup}
Let $S$ be a semigroup. A norm on $S$ is a function $\nu:S\rightarrow \mathbb R_{\ge 0}$ satisfying the following condition: For all $a, b \in S$, we have
$$
\nu(ab)\le \nu(a)\nu(b).
$$
\subsection{Examples}
\begin{enumerate}
\item
For any semigroup $S$, the map $\nu:S\rightarrow \mathbb R_{\ge 0}$ defined by
$\nu(a)=0$ for all $a\in S$ defines a norm in $S$.
\item
For any semigroup $S$, the map $\nu:S\rightarrow \mathbb R_{\ge 0}$ defined by
$\nu(a)=1$ for all $a\in S$ defines a norm in $S$.
\item
Let $S$ be a subsemigroup of the multiplicative semigroup $\mathbb R$ of real numbers then the map $\nu:S\rightarrow \mathbb R_{\ge 0}$ defined by
$\nu(a)=|a|$ for all $a\in S$ defines a norm in $S$.
\item
Let $S$ be a subgroup of the additive group $\mathbb R$ of real numbers. Then the following maps define norms on $S$:
\begin{align*}
\nu(x)& = e^x\\
\nu(x)&=e^{|x|}
\end{align*}
\item
Let $M_n(\mathbb K)$ be the semigroup of all square matrices of order $n$ defined over $\mathbb K=\mathbb R$ or $\mathbb K=\mathbb  C$. Then any sub-multiplicative matrix norm on $M_n(\mathbb K)$ is also a semigroup norm. The submultiplicativity ensures that the matrix norm satisfies the defining condition of a semigroup norm.
\end{enumerate}
\section{A semigroup norm on $M_n(\mathbb K)$}
Let $M_n(\mathbb K)$ be as in Example 5 above. For each integer $k$ such that $0<k\le n$ we define, for any $a\in M_n(\mathbb K)$,  
\begin{equation}\label{Norm1}
\nu_k (a) = {n \choose k}\, \max\big\{|m|: m \text{ is a minor of order $k$ of $a$}\big\}
\end{equation}
As special cases we have, for $a=[a_{ij}]\in M_n(\mathbb K)$:
\begin{itemize}
\item
$\nu_n(a)=\det(a)$ and so $\nu_n(ab)=\nu_n(a)\nu_n(b)$. Also $\nu_n(a)=0$ if and only if $a$ is singular.
\item
$\nu_1(a) = n \max\{|a_{ij}|\}$. In this case, $\nu_1(a)=0$ if and only if $a$ is the zero matrix.
\end{itemize}
\begin{prop}
For each integer $k$ such that $0<k\le n$, the function $\nu_k$ as defined in Eq.\eqref{Norm1} defines a semigroup norm on $M_n(\mathbb K)$.
\end{prop}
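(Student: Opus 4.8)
The plan is to reduce the submultiplicativity of $\nu_k$ to the Cauchy--Binet formula for the minors of a product of matrices. For $a,b\in M_n(\mathbb K)$ and index sets $I,J\subseteq\{1,\dots,n\}$ with $|I|=|J|=k$, write $a[I\mid J]$ for the $k\times k$ submatrix of $a$ with rows indexed by $I$ and columns by $J$. Cauchy--Binet then gives, for all such $I,J$,
\begin{equation*}
\det\big((ab)[I\mid J]\big)=\sum_{K}\det\big(a[I\mid K]\big)\,\det\big(b[K\mid J]\big),
\end{equation*}
where $K$ ranges over all $k$-element subsets of $\{1,\dots,n\}$, of which there are exactly $\binom nk$. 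This identity is the only substantive ingredient; the rest is the triangle inequality together with counting.

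Next I would bound an arbitrary order-$k$ minor of $ab$. Put $M_a=\max\{|m|:m\text{ a minor of order }k\text{ of }a\}$ and $M_b$ likewise, so that by definition $\nu_k(a)=\binom nk M_a$ and $\nu_k(b)=\binom nk M_b$. Applying the triangle inequality to the Cauchy--Binet sum and bounding each of its $\binom nk$ summands by $M_aM_b$ yields
\begin{equation*}
\big|\det\big((ab)[I\mid J]\big)\big|\;\le\;\binom nk\,M_a\,M_b
\end{equation*}
for every choice of $I$ and $J$, hence the largest order-$k$ minor of $ab$ is at most $\binom nk M_aM_b$.

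Finally I would multiply through by the combinatorial factor:
\begin{equation*}
\nu_k(ab)=\binom nk\,\max_{I,J}\big|\det\big((ab)[I\mid J]\big)\big|\;\le\;\binom nk\cdot\binom nk\,M_a\,M_b=\Big(\binom nk M_a\Big)\Big(\binom nk M_b\Big)=\nu_k(a)\,\nu_k(b).
\end{equation*}
Since $\nu_k$ is visibly a well-defined map into $\mathbb R_{\ge 0}$, this is exactly the defining inequality of a semigroup norm.

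I do not expect a serious obstacle here: the proof is essentially a bookkeeping exercise once Cauchy--Binet is invoked correctly. The one point worth care is matching the constant, and this is precisely why $\binom nk$ appears in the definition of $\nu_k$ --- it is there to absorb the number of terms produced by Cauchy--Binet, so that one power of $\binom nk$ is consumed by the sum and the remaining factors of $\binom nk$ reassemble into $\nu_k(a)\nu_k(b)$. As a sanity check, when $k=n$ the Cauchy--Binet sum collapses to a single term and the inequality degenerates to the equality $\det(ab)=\det(a)\det(b)$, in agreement with the remark preceding the proposition; and for $k=1$ it recovers the familiar estimate on the entries of a matrix product.
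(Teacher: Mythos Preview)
Your argument is correct and is essentially identical to the paper's own proof: both invoke the Cauchy--Binet formula for an arbitrary order-$k$ minor of $ab$, bound each of the $\binom{n}{k}$ summands by the product of the maximal minors of $a$ and $b$, and then observe that the extra factor of $\binom{n}{k}$ is exactly absorbed by the constant in the definition of $\nu_k$. The only cosmetic difference is notation---you work with $M_a,M_b$ while the paper writes the bounds as $\nu_k(a)/\binom{n}{k}$ and $\nu_k(b)/\binom{n}{k}$.
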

\begin{proof}
Clearly we have $\nu_k(a)\ge 0$ for all $k$ and for all $a$. So we need only verify that $\nu_k(ab)\le \nu_k(a)\nu_k(b)$ for all $k$ and for all $a,b\in M_n(\mathbb K)$.

We write
$$
a=[a_{ij}], \quad b=[b_{ij}], \quad ab=c=[c_{ij}].
$$

Let $m$ be a minor of order $k$ of $c$. Then $m$ is the determinant of the product of two matrices of the form
\begin{align*}
\alpha & =
\begin{bmatrix}
a_{i_11} & a_{i_12}& \cdots & a_{i_1n} \\
a_{i_21} & a_{i_22} & \cdots & a_{i_2n}\\
\vdots & & & \\
a_{i_k1} & a_{i_k2} & \cdots & a_{i_kn}
\end{bmatrix}
= [\alpha_{ij}]\quad \text{(say)} \\
\beta & =
\begin{bmatrix}
b_{1j_1} & b_{1j_2} & \cdots & b_{1j_k} \\
b_{2j_1} & b_{2j_2} & \cdots & b_{2j_k} \\
\vdots & & & \\
b_{nj_1} & b_{nj_2} & \cdots & b_{nj_k} 
\end{bmatrix} = [\beta_{ij}]\quad \text{(say)}
\end{align*}
Here $1\le i_1\le i_2 \le \cdots \le i_k\le n$ and $1\le j_1\le j_2\le \cdots \le j_k\le n$.

Using the Cauchy-Binet  formula for the evaluation of the determinant of the product of two (not necessarily square) matrices (see, for example, \S 5.14 \cite{Loehr}, or \S 3.2.2 \cite{Serre}) we have
\begin{align}
m & =\det (\alpha\beta) \notag \\
& =\sum_{(p)}
\begin{vmatrix}
\alpha_{1p_1} & \cdots & \alpha_{1p_k}  \\
\vdots & & \\
\alpha_{kp_1} & \cdots & \alpha_{kp_k}
\end{vmatrix}
\times
\begin{vmatrix}
\beta_{p_11}& \cdots & \beta_{p_1k}\\
\vdots & & \\
\beta_{p_k1} & \cdots & \beta_{p_kk}
\end{vmatrix}\label{minor}
\end{align}
where the summation is over all possible permutations of the indices $1\le p_1 < p_2 < \cdots < p_k\le k$. 

Firstly, we note that there are $n\choose k$ terms in the sum given in Eq.\eqref{minor}. Secondly, by the definition of $\nu_k$, the absolute value of the first factor of each term in Eq.\eqref{minor} is not greater than $\nu_k(a)/{n\choose k}$ and the absolute value of second factor is not greater than $\nu_k(b)/{n \choose k}$. Hence, we have
$$
|m|\le {n\choose k}\times \frac{\nu_k(a)}{{n \choose k}} \times \frac{\nu_k(b)}{{n\choose k}}
$$
and so
$$
{n\choose k} |m|\le \nu_k(a)\nu_k(b).
$$
Since this is true for all minors of $c$ of order $k$, we have
$$
\nu_k(ab)\le \nu_k(a)\nu_k(b).
$$
Thus $\nu_k$ is indeed a semigroup norm on $M_n(\mathbb K)$.
\end{proof}
\section{Properties}
Let $\nu$ be a norm on a semigroup $S$. We denote by $E(S)$ the set of idempotents in $S$. We also use other standard notations of algebraic theory of semigroups.
\begin{prop}
If $e\in E(S)$ then $\nu(e)=0$ or $\nu(e)\ge 1$.
\end{prop}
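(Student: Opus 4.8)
The plan is to use nothing more than the idempotency relation $e=e^{2}$ together with the single defining axiom $\nu(ab)\le\nu(a)\nu(b)$. First I would apply $\nu$ to both sides of $e=ee$ and invoke submultiplicativity with $a=b=e$, obtaining the purely numerical inequality $\nu(e)\le\nu(e)^{2}$.

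Next, set $t=\nu(e)$, so that $t\ge 0$ (because the norm is $\mathbb R_{\ge 0}$-valued) and $t\le t^{2}$, i.e. $t(t-1)\ge 0$. I would then split into two cases: if $t=0$ the first alternative holds; if $t>0$, dividing the inequality $t(t-1)\ge 0$ by the positive number $t$ gives $t-1\ge 0$, that is $t\ge 1$, which is the second alternative. Hence $\nu(e)=0$ or $\nu(e)\ge 1$, as claimed.

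I do not expect a genuine obstacle. The only point that deserves a moment's attention is that the conclusion relies essentially on $\nu$ being nonnegative: it is nonnegativity that excludes the open interval $(0,1)$ and forces the stated dichotomy. Equivalently, one can avoid the case analysis altogether by observing that $\{t\in\mathbb R_{\ge 0}:t\le t^{2}\}=\{0\}\cup[1,\infty)$ and that $\nu(e)$ lies in this set.
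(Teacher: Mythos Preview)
Your proof is correct and follows essentially the same approach as the paper: apply submultiplicativity to $e=e^2$ to obtain $\nu(e)\le\nu(e)^2$, then conclude the dichotomy. The only difference is cosmetic---you spell out the factorisation $t(t-1)\ge 0$ and the case split, while the paper simply notes that $\nu(e)\ne 0$ forces $\nu(e)\ge 1$.
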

\begin{proof}
Since $e^2 =e$, by the defining property of a norm, we have
$$
\nu(e)=\nu(e^2)\le \nu(e)\nu(e).
$$
It follows that if $\nu(e)\ne 0$ then we must have $\nu(e)\ge 1$.
\end{proof}
\begin{prop}
The set $Z=\{x\in S: \nu(x)=0\}$ is a subsemigroup of $S$.
\end{prop}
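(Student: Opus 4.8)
The plan is entirely straightforward: the only thing to check is that $Z$ is closed under the semigroup operation, since closure is the sole requirement for a subset to be a subsemigroup. So I would take arbitrary elements $x,y\in Z$, which by definition means $\nu(x)=\nu(y)=0$, and apply the defining inequality of the norm to the product $xy$.

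Concretely, from $\nu(xy)\le \nu(x)\nu(y)$ and $\nu(x)=\nu(y)=0$ we get $\nu(xy)\le 0$. Combined with the fact that $\nu$ takes values in $\mathbb R_{\ge 0}$, this forces $\nu(xy)=0$, i.e.\ $xy\in Z$. Hence $Z\cdot Z\subseteq Z$, and $Z$ is a subsemigroup of $S$. That is the whole argument; there is no genuine obstacle, as the claim is an immediate consequence of submultiplicativity together with nonnegativity.

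The one point worth a remark is the possible emptiness of $Z$: if $\nu(x)\ge 1$ for every $x\in S$ (for instance the constant norm $\nu\equiv 1$ from Example~2), then $Z=\varnothing$. Depending on one's convention, the empty set either is vacuously a subsemigroup or is excluded by fiat; I would simply note that whenever $Z\ne\varnothing$ the above shows it is a subsemigroup, and leave the empty case to the reader's preferred convention. It may also be worth observing, as a small bonus, that by Proposition~2 any idempotent lying in $Z$ is precisely an idempotent $e$ with $\nu(e)=0$, so $E(S)\cap Z$ consists of exactly those idempotents, which connects this result to the preceding proposition.
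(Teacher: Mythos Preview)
Your argument is correct and matches the paper's proof essentially line for line: take $x,y\in Z$, use submultiplicativity to get $\nu(xy)\le 0$, and combine with nonnegativity to conclude $xy\in Z$. The additional remarks on the possible emptiness of $Z$ and the link to idempotents are harmless extras not present in the paper, but the core proof is identical.
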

\begin{proof}
Let $a,b\in Z$. Then $\nu(a)=\nu(b)=0$. Hence
$$
\nu(ab)\le \nu(a)\nu(b)=0.
$$
But since $\nu(ab)\ge 0$, we must have $\nu(ab)=0$ and so $ab\in Z$. 
It follows that $Z$ is a subsemigroup of $S$.
\end{proof}
\begin{prop}
If $\nu(a)=0$ then $\nu(b)=0$ for all $b\in D_a$.
\end{prop}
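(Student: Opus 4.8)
The plan is to show that the null set $Z=\{x\in S:\nu(x)=0\}$, already known to be a subsemigroup by the preceding proposition, is in fact a two-sided ideal of $S$; since the Green $\mathcal{D}$-class $D_a$ is contained in the principal two-sided ideal generated by $a$, the conclusion will then follow at once from $a\in Z$.

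First I would strengthen the preceding proposition. If $z\in Z$ and $s\in S$, then submultiplicativity gives $\nu(sz)\le\nu(s)\nu(z)=0$ and $\nu(zs)\le\nu(z)\nu(s)=0$, and since $\nu$ takes values in $\mathbb R_{\ge 0}$ this forces $sz,zs\in Z$. Hence $S^{1}ZS^{1}\subseteq Z$, where as usual $S^{1}$ is $S$ with an identity adjoined if $S$ has none, the adjoined $1$ acting as identity; equivalently, $\nu$ vanishes identically on $S^{1}zS^{1}$ for every $z\in Z$, which is exactly the statement that $Z$ is a two-sided ideal of $S$.

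Now suppose $\nu(a)=0$, so that $a\in Z$, and let $b\in D_a$. Since $\mathcal{D}\subseteq\mathcal{J}$ we have $b\,\mathcal{J}\,a$, hence $b\in S^{1}bS^{1}=S^{1}aS^{1}$, say $b=uav$ with $u,v\in S^{1}$. Two applications of the norm inequality give
$$
\nu(b)=\nu(uav)\le\nu(u)\,\nu(a)\,\nu(v)=0,
$$
where a multiplier equal to the adjoined identity is simply dropped (as in $\nu(1\cdot a)=\nu(a)$). Thus $\nu(b)=0$, and since $b\in D_a$ was arbitrary the proposition follows. If one prefers to avoid $\mathcal{J}$, the same two-line argument works directly through $\mathcal{D}=\mathcal{R}\circ\mathcal{L}$: choose $c$ with $a\,\mathcal{R}\,c$ and $c\,\mathcal{L}\,b$, deduce $\nu(c)=0$ from $c\in aS^{1}$, and then $\nu(b)=0$ from $b\in S^{1}c$.

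I do not anticipate any real obstacle: the mathematical content is just the one-line submultiplicativity estimate. The only point demanding a little care is the bookkeeping around $S^{1}$ — namely, checking that an element witnessing a Green relation may be taken in $S$ or, when it equals the adjoined identity, may simply be omitted — and this is routine.
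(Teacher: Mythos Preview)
Your proof is correct and essentially the same as the paper's: both amount to writing $b$ as a product involving $a$ via Green's relations and applying submultiplicativity of $\nu$. The paper's argument is precisely the alternative you sketch at the end---choose $c$ with $a\,\mathcal{R}\,c\,\mathcal{L}\,b$, write $c=ax$ and $b=yc$ with $x,y\in S^{1}$, and conclude $\nu(c)=0$, then $\nu(b)=0$---rather than passing through $\mathcal{D}\subseteq\mathcal{J}$ and the two-sided ideal $S^{1}aS^{1}$ as in your main line; the content is identical.
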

\begin{proof}
Let $\nu(a)=0$ and let $a\mathcal D b$. Then there is $c$ such that $a\mathcal R c \mathcal b$. Hence we can find $x,y\in S^1$ such that 
$$
c=ax, \quad b=yb.
$$
Now we have
$$
0\le \nu(c)=\nu(ax)\le\nu(a)\nu(x)=0
$$
and so $\nu(c)=0$. Similarly, form $b=yc$ it follows that $\nu(b)=0$.
\end{proof}
As a corollary we note that if $\nu(a)\ne 0$ then $\nu(b)\ne 0$ for all $b\in D_a$.
\begin{prop}
If $b$ is an inverse of $a$ and if $\nu(a)\ne 0$ then $\nu(b)\ge 1/\nu(a)$.
\end{prop}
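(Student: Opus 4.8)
The plan is to exploit the defining relation of a semigroup inverse together with submultiplicativity of $\nu$. Recall that saying $b$ is an inverse of $a$ means precisely that $aba=a$ and $bab=b$; I expect to need only the first of these two relations.

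First I would apply the norm inequality twice to the product $aba$:
$$\nu(a)=\nu(aba)\le\nu(a)\,\nu(ba)\le\nu(a)\,\nu(b)\,\nu(a)=\nu(a)^2\,\nu(b).$$
Next I would invoke the hypothesis $\nu(a)\ne 0$. Since $\nu$ takes values in $\mathbb{R}_{\ge 0}$, this gives $\nu(a)>0$, which is exactly what legitimizes division. Dividing the inequality $\nu(a)\le\nu(a)^2\,\nu(b)$ through by $\nu(a)$ yields $1\le\nu(a)\nu(b)$, and dividing once more by $\nu(a)$ gives the claimed bound $\nu(b)\ge 1/\nu(a)$.

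There is no genuinely hard step here; the only points deserving a moment's care are the innocuous inference from $\nu(a)\ne 0$ to $\nu(a)>0$ (needed to divide safely) and the observation that the relation $bab=b$ is never used, so in fact the weaker hypothesis $aba=a$ alone already suffices. As a side remark, the computation is symmetric: if in addition $\nu(b)\ne 0$ one obtains $\nu(a)\ge 1/\nu(b)$ as well, so for a genuine inverse pair the two norms satisfy $\nu(a)\nu(b)\ge 1$. This is consistent with the corollary to the preceding proposition, since an inverse of $a$ always lies in $D_a$, and it refines that corollary by giving a quantitative lower bound rather than mere non-vanishing.
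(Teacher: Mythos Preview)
Your proof is correct and matches the paper's argument essentially line for line: both start from $a=aba$, apply submultiplicativity to obtain $\nu(a)\le\nu(a)^2\nu(b)$, and divide by the nonzero quantity $\nu(a)$. Your added remarks (that only $aba=a$ is needed, and the symmetric bound) are sound but go slightly beyond what the paper records.
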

\begin{proof}
If $b$ is an inverse of $a$ we have 
$$
a=aba.
$$
Thus
$$
\nu(a)\le\nu(a)\nu(b)\nu(a).
$$
If $\nu(a)\ne 0$ then, canceling out this nonzero factor we get
$$
1\le \nu(a)\nu(b)
$$
from which the result follows.
\end{proof}
\begin{prop}
Let  $S$ be  a finite group and $\nu(a)\ne 0$ for all $a\in S$, Then $\nu(a)\ge 1$ for all $a\in S$.
\end{prop}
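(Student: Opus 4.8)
The plan is to combine two simple observations: the identity element $e$ of $S$ is an idempotent, so Proposition 2 constrains $\nu(e)$; and in a finite group every element has finite order, so a suitable power of any $a$ equals $e$.

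First I would note that the submultiplicativity $\nu(xy)\le\nu(x)\nu(y)$ extends by an immediate induction to finite products, so that $\nu(a^{n})\le\nu(a)^{n}$ for every $a\in S$ and every positive integer $n$. Next, since $e=e^{2}\in E(S)$ and $\nu(e)\ne 0$ by hypothesis, Proposition 2 yields $\nu(e)\ge 1$.

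Now fix $a\in S$. As $S$ is finite, $a$ has finite order; choose a positive integer $n$ with $a^{n}=e$. Then
$$
1\le\nu(e)=\nu(a^{n})\le\nu(a)^{n},
$$
so $\nu(a)^{n}\ge 1$. Since $0\le t<1$ implies $t^{n}<1$ for a positive integer $n$, this forces $\nu(a)\ge 1$, which is exactly the assertion.

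There is essentially no obstacle here; the one place to be attentive is the hypothesis that $S$ is finite. It is used only to ensure that $a$ has finite order (torsion would already suffice), which is what lets us pass from $a$ to the idempotent $e$ by raising to a power. Finiteness cannot simply be dropped: on the additive group $\mathbb Z$ the map $\nu(x)=e^{x}$ from Example 4 is a norm that never vanishes, yet $\nu(-1)=e^{-1}<1$.
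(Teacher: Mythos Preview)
Your argument is correct, but it takes a different route from the paper's. The paper argues by minimality: since $S$ is finite, $\lambda=\min\{\nu(x):x\in S\}$ is attained at some $c\in S$, and $\lambda>0$ by hypothesis; then $\nu(c^{2})\le\nu(c)^{2}=\lambda^{2}$, and if $\lambda<1$ this would give $\nu(c^{2})<\lambda$, contradicting minimality. You instead exploit the group structure directly, combining torsion with the idempotent bound of Proposition~2.

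The two approaches generalize in different directions. The paper's minimality argument never uses the identity or inverses, so it actually proves the stronger statement that the conclusion holds for any finite \emph{semigroup} with nowhere-vanishing norm. Your argument, by contrast, only needs each element to have finite order, so it applies verbatim to any (possibly infinite) torsion group---a point you already note. These extensions are incomparable, and your counterexample on $\mathbb{Z}$ plays the same role as the paper's counterexample on $\mathbb{R}\setminus\{0\}$ with $\nu(x)=|x|$.
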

\begin{proof}
Let
$$
\lambda = \min \{ \nu(x): x\in S\}.
$$
Since $S$ is finite, the set $\{\nu(x):x\in S\}$ is also finite. Hence $\lambda\ne 0$ and there must be an element $c\in S$ such that $\lambda = \nu(c)$. 
Now we have 
$$
\nu(c^2)=\nu(c)\nu(c)=\lambda^2.
$$
If $\lambda<1$ then $\lambda^2<\lambda$ and so $\nu(c^2)<\lambda$. This would contradict the fact that $\lambda$ is the minimal element of the set 
$\{\nu(x):x\in S\}$ . Hence we must have $\lambda \ge 1$. It follows that $\nu(x)\ge 1$ for all $x\in S$.
\end{proof}
The conclusion of Proposition 6 need not be true if the assumption that $S$ is finite is dropped. Consider the multiplicative semigroup $S=\mathbb R\setminus \{0\}$ with the norm $\nu(x)=|x|$. In $S$, we have $\nu(a)\ne 0$ for all $a$ but it is not true that $\nu(a)\ge 1$ for all $a\in S$.
\begin{prop}
If there is a a zero element (left, right, or two-sided) $z$ in  a normed semigroup $S$ such that $\nu(z)\ne 0$ then $\nu(x)\ge 1$ for all $x\in S$.
\end{prop}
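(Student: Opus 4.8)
The plan is to treat the three cases (left zero, right zero, two-sided zero) and observe that the last is subsumed by either of the first two. Recall that a left zero $z$ satisfies $zx = z$ for all $x\in S$, a right zero satisfies $xz = z$ for all $x\in S$, and a two-sided zero is simultaneously a left and a right zero.

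First I would suppose $z$ is a left zero with $\nu(z)\neq 0$. Fix an arbitrary $x\in S$. From $zx = z$ and the defining inequality of the norm,
$$
\nu(z) = \nu(zx) \le \nu(z)\,\nu(x).
$$
Since $\nu(z)\neq 0$, this positive factor may be cancelled, yielding $1 \le \nu(x)$. As $x$ was arbitrary, $\nu(x)\ge 1$ for all $x\in S$. The right-zero case is entirely symmetric: from $xz = z$ one gets $\nu(z) = \nu(xz)\le \nu(x)\,\nu(z)$, and cancelling $\nu(z)$ again gives $\nu(x)\ge 1$. Finally, a two-sided zero is in particular a left zero (or a right zero), so that case is covered by the argument already given.

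There is essentially no obstacle here; the only point requiring a moment's care is bookkeeping the orientation of the product in the two one-sided cases so that the cancellation of $\nu(z)$ is legitimate. I would simply write out both computations rather than appealing to a duality argument, since the paper has not formally set up the left-right dual of its results.
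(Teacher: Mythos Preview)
Your argument is correct and mirrors the paper's proof almost exactly: both apply the norm inequality to $zx=z$ (or $xz=z$) and cancel the nonzero factor $\nu(z)$. The only difference is that you write out the right-zero computation explicitly, whereas the paper simply notes that the same argument works.
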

\begin{proof}
Let $z$ be a left zero in $S$. Then we have $z=zx$ for all $x\in S$. Hence
$$
\nu(z)=\nu(zx)\le \nu(z)\nu(x).
$$
Canceling out the nonzero factor $\nu(z)$ we get $\nu(x)\ge 1$ for all $x$ in $S$. The same argument can be applied to the case where the zero element is a right zero.
\end{proof}
The next proposition shows that if $\nu(b)=0$ for some $b$ in $S$, then all elements in $S$ which are ``less than'' $b$ relative to the natural partial order in $S$ have norm $0$ (see \cite{Mitsch} for more details on natural partial orders on arbitrary semigroups). 
\begin{prop}
Let $a,b\in S$ and $a\le b$ in the natural partial order in $S$. 
If $\nu(b)=0$ then $\nu(a)=0$.
\end{prop}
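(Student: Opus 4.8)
The plan is to unwind the definition of Mitsch's natural partial order and then apply submultiplicativity of $\nu$ directly. Recall that, for $a,b\in S$, the relation $a\le b$ asserts in particular that there exist $x,y\in S^1$ with $a=xb$ and $a=by$ (subject to further absorption conditions which will not be needed); the only feature I shall use is the left factorisation $a=xb$ with $x\in S^1$, the dual factorisation $a=by$ serving equally well.

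I would first set aside the degenerate possibility that the witness $x$ is the identity formally adjoined to $S$ rather than a genuine element of $S$. If $x\notin S$, then $x=1$, so $a=1\cdot b=b$ and hence $\nu(a)=\nu(b)=0$, as required. Otherwise $x\in S$; then $x$ and $b$ both lie in $S$, so the defining inequality of a norm applies and gives
$$
0\le\nu(a)=\nu(xb)\le\nu(x)\,\nu(b)=\nu(x)\cdot 0=0,
$$
forcing $\nu(a)=0$. This completes the argument.

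The proof is short, and the only point requiring care is precisely the case distinction above: the inequality $\nu(xy)\le\nu(x)\nu(y)$ is postulated only for elements of $S$, so the case in which the order witness is the adjoined identity must be checked separately. It is worth remarking that the argument uses far less than the full strength of the natural partial order: the same two lines show that $\nu(b)=0$ forces $\nu(a)=0$ whenever $a\in S^1b$, whenever $a\in bS^1$, and more generally whenever $a\in S^1bS^1$. Thus the subsemigroup $Z=\{x\in S:\nu(x)=0\}$ is an order ideal not only for Mitsch's order but also for each of Green's quasi-orders $\le_{\mathcal L}$, $\le_{\mathcal R}$, $\le_{\mathcal J}$, and the earlier proposition asserting that $\nu$ vanishes on all of $D_a$ whenever it vanishes at $a$ is yet another instance of this phenomenon.
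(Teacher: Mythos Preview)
Your proof is correct and follows exactly the same route as the paper: unwind Mitsch's definition to get $a=xb$ with $x\in S^1$, dispose of the case $x=1$ separately, and then apply submultiplicativity. Your closing remarks about $Z$ being an order ideal for the Green quasi-orders are a nice bonus and are consistent with the paper's earlier Proposition~4.
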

\begin{proof}
Let $a\le b$ in the natural partial order in $S$ and let $\nu(b)=0$. 
Then, by definition,  there exist $x,y\in S^1$ such that 
$$
a = xb = by, \quad xa = a.
$$
If $x=1$ then $a=b$ and so $\nu(a)=0$. Otherwise we have
$$
\nu(a)=\nu(xb)\le \nu(x)\nu(b) = 0.
$$
Hence the result.
\end{proof}
\section{Topological regular semigroups}
\subsection{Definitions}
Let $X$ and $Y$ be nonempty sets. Then a map $f : X \rightarrow  P(Y )$, where $ P(Y )$ is the
family of subsets of $Y$, is called a multiple map of $X$ into $Y$. We write $f : X \rightarrow  Y$ in
this case also. If $g : A \rightarrow  B$ is a function then 
$g^{-1}: B\rightarrow A$ defined in the obvious
way is a multiple map. If S is a regular semigroup then the map $V : S \rightarrow  S$ defined
by $x\mapsto  V(x)$ , where $V(x)$ is the set of inverses of $x$, is a multiple map.
If $X$ and $Y$ are topological spaces and $f : X \rightarrow  Y$ is a multiple map then we say
that $f$ is continuous if for each open set $B \subseteq  Y$ , the set
$$
f^{-1}(B) =\{ x \in X : f(x) \cap B \ne \emptyset \}
$$
is open in $X$, and $f$ is said to be open if for each open set $A \subseteq  X$ , the set
$$
f(A)= \{ y \in  Y : y\in  f(x) \text{ for some }x \in A\}
$$
is open in $Y$.

In the following, Definition 1 is standard (see \cite{Carruth}) and Definition 2 is due to Rajan (see \cite{Rajan}).
\begin{defn}
A topological semigroup is a triple $(S, \circ, \tau)$ where $(S, \tau)$  is a
topological space and $(S, \circ)$ is a semigroup such that the product map $\circ : S\times S \rightarrow S$ 
is continuous when $S \times S$ is given the product topology.
\end{defn}
\begin{defn}
Let $S$ be a topological semigroup which is also a regular semigroup.
Then $S$ is called a topological regular semigroup  if the multiple map 
$V : S \rightarrow S$ is open.
\end{defn}
\subsection{An interesting property}
The following theorem can be used to check whether a regular semigroup equi-pped with  a continuous norm is a topological regular semigroup. As a corollary to the theorem, we prove the surprising result that sthe semigroup $M_n(\mathbb K)$ equipped with the usual topology is not a topological regular semigroup in the sense of Definition 2.
\begin{thm}
Let $S$ be a regular semigroup which is also a topological semigroup.
Let $\nu$ be a continuous norm on $S$. If the set 
$$
N = \{ x\in  S : \nu(x)\ne 0\} 
$$
is not closed in $S$ then $S$ is not a topological regular semigroup as defined in Definition 2. 
\end{thm}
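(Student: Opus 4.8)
The plan is to show directly that the multiple map $V:S\to S$ is \emph{not} open; since $S$ is already assumed to be a regular topological semigroup, this alone suffices to conclude that $S$ is not a topological regular semigroup in the sense of Definition 2. First I would record two elementary consequences of the continuity of $\nu$: the set $Z=\{x\in S:\nu(x)=0\}=\nu^{-1}(\{0\})$ is closed, so $N=S\setminus Z$ is open; and, as $N$ is assumed not closed, we may fix a point $p\in\overline N\setminus N$, which satisfies $\nu(p)=0$ since $p\notin N$ and $\nu\ge 0$. The whole argument then hinges on the single open set
\[
U=\{x\in S:\nu(x)<1\},
\]
which is open by continuity of $\nu$ and contains $p$ (as $\nu(p)=0$).

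The key step --- and the one I expect to carry the real content --- is the inclusion $V(U)\cap U\subseteq Z$. To prove it, take $y\in V(U)\cap U$ and suppose for contradiction that $\nu(y)\ne 0$. By the definition of $V(U)$, $y$ is an inverse of some $x$ with $\nu(x)<1$; since ``being an inverse of'' is symmetric, $x$ is also an inverse of $y$, so Proposition 5 applies and gives $\nu(x)\ge 1/\nu(y)$, that is, $\nu(x)\,\nu(y)\ge 1$. But $x,y\in U$ force $\nu(x)<1$ and $\nu(y)<1$, whence $\nu(x)\,\nu(y)<1$ --- a contradiction. Hence $\nu(y)=0$, i.e.\ $y\in Z$. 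The only subtlety is the choice of the threshold $1$ in the definition of $U$: it is precisely what makes the product estimate of Proposition 5 incompatible with both $x$ and $y$ lying in $U$. Informally, an element of small norm can have only inverses of large norm, so the image $V(U)$ can meet $U$ only inside $Z$.

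Next I would verify that $p\in V(U)$. Since $S$ is regular, $p$ has an inverse $q$ (if regularity supplies only a one-sided inverse $q_0$, replace it by $q_0pq_0$ to get a genuine inverse), and then $p$ is in turn an inverse of $q$. Inverse elements always lie in a common $\mathcal D$-class, so $q\in D_p$; as $\nu(p)=0$, Proposition 4 gives $\nu(q)=0<1$, so $q\in U$ and therefore $p\in V(q)\subseteq V(U)$.

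Finally I would assemble the contradiction with openness. We have $p\in V(U)$. If $V(U)$ were open, there would be an open set $W$ with $p\in W\subseteq V(U)$; then $W\cap U$ is an open neighbourhood of $p$, and by the key step $W\cap U\subseteq V(U)\cap U\subseteq Z$. But $p\in\overline N$, so every neighbourhood of $p$ meets $N=S\setminus Z$, contradicting $W\cap U\subseteq Z$. Thus $V(U)$ is not open although $U$ is, so $V$ is not an open multiple map and $S$ is not a topological regular semigroup in the sense of Definition 2. I note that the argument invokes only the continuity of $\nu$, the regularity of $S$, and Propositions 4 and 5.
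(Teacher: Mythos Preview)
Your proof is correct and rests on the same idea as the paper's: exhibit one open set whose $V$-image is not open, using Proposition~5 to show that an element of small nonzero norm can only have inverses of large norm. The difference is purely in the packaging. The paper fixes an $\epsilon>0$, takes a neighbourhood $G'=\{\nu<\epsilon\}$ of an inverse $a'$ of the boundary point $a$, and for each neighbourhood $H'$ of $a$ intersects with a second neighbourhood $H''=\{\nu<1/(2\epsilon)\}$ to locate a point $b\in N$ that cannot belong to $V(G')$. You instead work with the single symmetric set $U=\{\nu<1\}$, so that both $p$ and its inverse $q$ already lie in $U$; the inequality $\nu(x)\nu(y)\ge 1$ from Proposition~5 then clashes directly with $\nu(x),\nu(y)<1$, and your clean inclusion $V(U)\cap U\subseteq Z$ replaces the paper's two-threshold juggling with $\epsilon$ and $1/(2\epsilon)$. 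This is a modest but genuine streamlining rather than a different method. One small remark: your detour through Proposition~4 and $\mathcal D$-classes to obtain $\nu(q)=0$ is valid, but the paper's one-line computation $\nu(q)=\nu(qpq)\le\nu(q)^2\nu(p)=0$ is shorter and avoids the extra citation.
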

\begin{proof}
Let $N$ be not closed in $S$. Let $a \in  S\setminus  N$ be in the closure of $N$. Since $a\in S\setminus N$, we have $\nu(a)=0$.

Let $a^\prime$  be an inverse of $a$. Since $a^\prime  = a^\prime aa^\prime$, properties of norm imply  that
$\nu(a^\prime) = 0$ so that $a^\prime \in S\setminus N$. 

Now choose $\epsilon >0$  arbitrarily. By the continuity of $\nu$ we
can find an open neighbourhood $G^\prime$  of $a^\prime$  such that 
$\nu(x) < \epsilon$  for all $x \in G^\prime$. We shall
show that $V(G^\prime)$ is not open $S$.

Obviously $ a\in V(G^\prime)$. Let $H^\prime$ be an arbitrary neighbourhood of $a$. 
By the continuity of $\nu$ we can find an open neighbourhood $H^{\prime\prime}$ of $a$ such that $\nu(y)< \frac{1}{2\epsilon}$ for every $y\in H^{\prime\prime}$. 
\begin{center}
\includegraphics{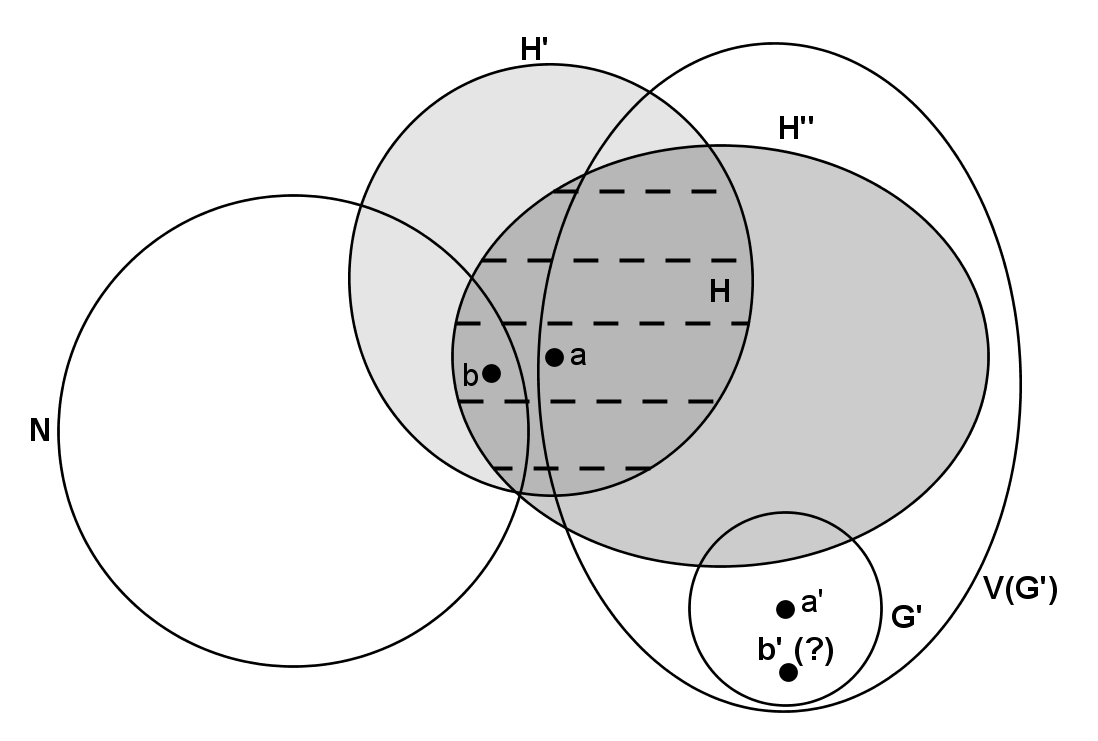}
\end{center}
Now let $H=H^\prime \cap H^{\prime\prime}$. $H$ is an open neighbourhood of $a$ which is contained in $H^\prime$. Since $a$ is in the closure of $N$, there must be an element $b\in H$ such $\nu(b)\ne 0$. We show below that $b\not\in V(G^\prime)$. This would then imply that $H\not \subseteq V(G^\prime)$ and so $H^\prime \not \subseteq V(G^\prime)$. Thus any neighbourhood of $a$ would not be completely contained in $V(G^\prime)$. It would follow that $a$ is not an interior point of $V(G^\prime)$ and so $V(G^\prime)$ would not be an open set.

To show that $b\not\in V(G^\prime)$, assume the contrary and let $b$ be an inverse of $b^\prime \in  G^\prime$. By the choice of $G^\prime$  we have $\nu(b^\prime)<\epsilon$. Since $\nu(b)\ne 0$  and $b = bb^\prime b$,
we have
$$
\nu(b^\prime) \ge \frac{1}{\nu(b)} >\frac{1}{1/2\epsilon}=2\epsilon
$$
which is a contradiction. Thus we must have $b\not\in V(G^\prime)$. 

Since $G^\prime$ is open and $V(G^\prime)$ is not, $S$ cannot be a topological regular semigroup.
\end{proof}
\begin{cor}
The semigroup $M_n(\mathbb K)$ of all square matrices of order $n$ equipped with the usual topology is not a topological regular semigroup.
\end{cor}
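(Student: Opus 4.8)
The plan is to deduce the Corollary directly from the Theorem, taking $S = M_n(\mathbb{K})$ with the usual (Euclidean) topology and $\nu = \nu_n$, the $k = n$ case of the norm in Eq.~\eqref{Norm1}, so that $\nu_n(a) = |\det a|$. Three hypotheses of the Theorem have to be checked. First, $M_n(\mathbb{K})$ is a topological semigroup: the entries of a product $ab$ are polynomials in the entries of $a$ and $b$, so matrix multiplication $M_n(\mathbb{K}) \times M_n(\mathbb{K}) \to M_n(\mathbb{K})$ is continuous. Second, $M_n(\mathbb{K})$ is a regular semigroup: if $a$ has rank $r$, write $a = p\,d\,q$ with $p, q$ invertible and $d$ the diagonal idempotent with $r$ ones and $n-r$ zeros on the diagonal; then $b = q^{-1} d p^{-1}$ satisfies $a b a = a$, so every element of $M_n(\mathbb{K})$ has a von~Neumann inverse. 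Third, $\nu_n$ is a continuous norm: it is a semigroup norm by Proposition~1, and $a \mapsto |\det a|$ is continuous because $\det$ is a polynomial in the entries and $|\cdot|$ is continuous.

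It then remains to identify the set $N$ of the Theorem and show it is not closed. Here
$$
N = \{x \in M_n(\mathbb{K}) : \nu_n(x) \ne 0\} = \{x \in M_n(\mathbb{K}) : \det x \ne 0\} = GL_n(\mathbb{K}),
$$
and one shows $GL_n(\mathbb{K})$ is not closed in $M_n(\mathbb{K})$ by exhibiting a single sequence of invertible matrices converging to a singular one: the matrices $\tfrac{1}{k} I$, $k = 1, 2, \dots$, all lie in $GL_n(\mathbb{K})$ but converge entrywise, hence in the usual topology, to the zero matrix, which is singular. Thus the zero matrix belongs to the closure of $N$ but not to $N$, so $N$ is not closed, and the Theorem applies to give the conclusion.

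There is no genuinely hard step here; the whole content is carried by the Theorem, and its hypotheses are all routine in this example. The one point worth stating with care is the \emph{choice} of norm: any sub-multiplicative matrix norm is a continuous semigroup norm on $M_n(\mathbb{K})$ (Example~5), but its zero set is $\{0\}$, which \emph{is} closed, so it would not trigger the Theorem. The determinant norm $\nu_n$ is exactly the one whose non-vanishing locus is $GL_n(\mathbb{K})$, and it is the failure of $GL_n(\mathbb{K})$ to be closed that makes the argument work.
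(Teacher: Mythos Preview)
Your proof is correct and follows the same strategy as the paper's: apply the Theorem with one of the norms $\nu_k$ from Eq.~\eqref{Norm1} and exhibit a sequence in $N$ converging to the zero matrix. The only difference is the choice of index: the paper takes $0<k<n$ (so that $N_k=\{x:\operatorname{rank}(x)\ge k\}$) and uses $x_m=\begin{bmatrix}\tfrac{1}{m}I_k & 0\\ 0 & 0\end{bmatrix}$, whereas you take $k=n$ (so that $N=GL_n(\mathbb K)$) and use $\tfrac{1}{m}I_n$; your choice is arguably the cleanest and, incidentally, also handles $n=1$, which the paper's restriction $k<n$ leaves out.
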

\begin{proof}
Let $0<k <n$ and let $\nu_k$ be defined as in Eq.\eqref{Norm1}. Then we have already proved that s$\nu_k$ is a norm on $M_n(\mathbb K)$. Since the determinant and max functions are continuous, the function $\nu_k$ is also continuous. Now we have
\begin{align*}
N_k & = \{ x\in M_n(\mathbb K):\nu_k(x)\ne 0\}\\
& = \{ x\in M_n(\mathbb K) : \text{rank }(x)\ge k\}.
\end{align*}
For each positive integer $m$, let
$$
x_m=\begin{bmatrix} \frac{1}{m}I & O \\ O & O \end{bmatrix}
$$
where $I$ is the identity matrix of order $k$ and $O$'s are zero-matrices. 
Then $x_m\in N_k$ and 
$$
\lim_{m\rightarrow \infty}x_m=0, \text{ the zero-matrix in $M_n(\mathbb K)$}.
$$
But the zero-matrix is not in $N_k$. It follows that $N_k$ is not closed in $M_n(\mathbb K)$. Therefore by theorem $M_n(\mathbb K)$ is not a topological regular semigroup.
\end{proof}
\paragraph{Remarks.} The conclusion of Corollary 1 indicates the inadequecy of Definition 1 as a `proper' definition of a topological regular semigroup. Since $M_n(\mathbb K)$ is a natural example of a topological regular semigroup, any definition of a topological regular semigroup which excludes $M_n(\mathbb K)$ as an example should be treated as unsatisfactory. An attempt to reformulate the definition of a topological regular semigroup which addresses this issue has been made in \cite{Krish}.


\begin{thebibliography}{99}
\bibitem{Dikran}
Dikran Dikranjan and Anna Giordano Bruno, ``Discrete dynamical systems in group theory'', Note di Matematica, 33 (2013) no. 1, 1–48. 
%
\bibitem{Carruth}
Carruth, J. H., Hildebrant, J. A. and Koch, R. J., {\em The theory of topological semigroups, Vols.
I \& II}, Marcel Dekker, 1983.
%

\bibitem{Kryzius}
Z. Kryžius, ``Additive arithmetic functions on semigroups and the preservation of weak convergence of measures'', 
Lithuanian Mathematical Journal,
January–March, 1985, Volume 25, Issue 1, pp 35-43.
%
\bibitem{Krish}
V.N. Krishnachandran, ''On topological regular semigroups'' (preprint). 
%
\bibitem{Loehr}
Nicholas Loehr, {\em Advanced Linear Algebra}, CRC Press, 2014.
%
\bibitem{Mitsch}
H. Mitsch,  ``A natural partial order for semigroups'', Proceedings of the American Mathematical Society 97 (3)  (July 1986).
%
\bibitem{Pavlov}
A. A. Pavlov, ``Normed groups and their applications in noncommutative differential geometry'', Journal of Mathematical Sciences, Vol. 113, No. 5, 2003.
\bibitem{Porubsky}
Stefan Porubsky, ``On exponents in arithmetical semigroups'', Monatshefte fur Mathematik, 84, 49-53, 1977.
\bibitem{Rajan}
Rajan, A. R., ``Topological regular semigroups and topological inductive groupoids'', Semigroup
Forum, (1993) 160-167.
%
\bibitem{Serre}
Denis Serre, {\em Matrices: Theory and Applications}, Volume 937 of Graduate Texts in Mathematics, Springer, 2010.
\bibitem{Shkarin}
Stanislav Shkarin, ``Hypercyclic and mixing operator semigroups'', 
Proceedings of the Edinburgh Mathematical Society, 09/2012. 
%
\bibitem{Valero}
Oscar Valero, ``Closed graph and open mapping theorems for normed cones'', Proc. Indian Acad. Sci. (Math. Sci.) Vol. 118, No. 2, May 2008, pp. 245–254.
\bibitem{Wegmann}
Wegmann, H, ``Beitrage zur Zahlentheorie auf freien Halbgruppen I'', J. reine angew. Math. 221, 20-43, (1966).
%
\end{thebibliography}
\end{document}